\documentclass[a4paper,oneside,english,reqno]{amsart}
\usepackage[utf8]{inputenc}
\usepackage[T1]{fontenc}

\DeclareFontFamily{OMX}{mlmex}{}
\DeclareFontShape{OMX}{mlmex}{m}{n}{%
   <->mlmex10%
   }{}%
\usepackage{mlmodern}

\usepackage{graphicx}
\makeatletter
\newcommand\mapsfrom{\mathrel{\reflectbox{\m@th$\mapsto$}}}
\makeatother
\usepackage[hscale=0.6, vscale=0.7]{geometry}
\usepackage{babel}
\usepackage[numbers,sort]{natbib}
\usepackage{hyperref}
\hypersetup{%
  colorlinks=true,%
  citecolor=[RGB]{120,29,126},%
  pdfauthor={Jean-François Burnol},%
  pdfsubject={Moments, missing digits},%
  pdfstartview=FitH,%
  pdfpagemode=UseNone,%
}

\usepackage{amsmath,amsthm,amssymb}
\usepackage{mathtools}

\DeclarePairedDelimiterX\Iffint[2]{\lbrack\!\lbrack}{\rbrack\!\rbrack}{#1,#2}
\DeclarePairedDelimiterX\Ioo[2]{\lparen}{\rparen}{#1,#2}
\DeclarePairedDelimiterX\Iof[2]{\lparen}{\rbrack}{#1,#2}
\DeclarePairedDelimiterX\Ifo[2]{\lbrack}{\rparen}{#1,#2}
\DeclarePairedDelimiterX\Iff[2]{\lbrack}{\rbrack}{#1,#2}

\newcommand\cA{\mathcal{A}}
\newcommand\cB{\mathcal{B}}

\newcommand\e{\mathsf{e}}
\newcommand\dt{\mathrm{d}t}
\newcommand\du{\mathrm{d}u}
\newcommand\dx{\mathrm{d}x}
\newcommand\dy{\mathrm{d}y}

\newcommand\ddx{\frac{\mathrm{d}}{\mathrm{d}x}}

\theoremstyle{plain}
\newtheorem{theo}{Theorem}
\newtheorem{prop}{Proposition}
\newtheorem{lem}{Lemma}

\theoremstyle{definition}

\allowdisplaybreaks

\usepackage[np,autolanguage]{numprint}
\AtBeginDocument{\npthousandsep{\allowbreak\,}}

\title[Oscillations of moments]{%
  The asymptotic oscillations of moments related to Dirichlet series with missing digits}

\author[J.-F. Burnol]{Jean-François Burnol}

\address{Université de Lille,
  Faculté des Sciences et technologies,
  Département de mathématiques,
  Cité Scientifique,
  F-59655 Villeneuve d'Ascq cedex,
  France}
\email{jean-francois.burnol@univ-lille.fr}

\newcommand\arxivurl[1]{\href{https://arxiv.org/abs/#1}{\textsf{arXiv:#1}}}

\date{April 27, 2026.}

\subjclass[2020]{11N37, 44A60, 11A63 (Primary) 11M06, 11M41 (Secondary)}
\keywords{Moments, asymptotics, Hardy series, missing digits}

\begin{document}

\begin{abstract}
  We prove that the (suitably rescaled) moments of certain discrete measures
  on the unit interval, which are related to the numerical evaluation of zeta
  series with missing digits in radix $b$, are asymptotically $1$-periodic in
  the base $b$ logarithm of the index, i.e. asymptotically invariant under
  multiplication by $b$ of the index.
\end{abstract}

\maketitle

\section{Introduction}

The author introduced in \cite{burnolkempner} a method of numerical evaluation
of series such as $\sum' n^{-1}$ where the symbol $\sum'$ means that the sum
is restricted to denominators not using the digit $9$ in radix $10$ (Kempner
series). This is based upon the computation of certain moments
$u_m=\int_{\Ifo{0}{1}}x^m\mu(\dx)$ of some discrete measures on the unit
interval.  For example in this ``no-$9$'' radix $10$ case, the moments are
originally defined by the formula:
\begin{equation}\label{eq:intro1}
  u_m = 0^m + \sum_{l=1}^\infty 
   \sum_{(a_1,a_2,\dots,a_l)\in \{0,\dots,8\}^l} \Bigl(\frac{a_1}{10}+\dots +\frac{a_l}{10^l}\Bigr)^m \frac1{10^l}.
\end{equation}
Direct numerical computation from Equation \eqref{eq:intro1} is not really feasible
for large $m$'s due to the exponential increase of the number of contributions
needed to reach a certain target precision.  We can compute high precision
values though, using a certain recurrence formula obeyed by the sequence
$(u_m)_{m\geq0}$. The recurrence involves binomial coefficients and the power sums of
the digits from $1$ to $8$. See Equation \eqref{eq:umrec},  which handles
the general quantities we will study in this paper, which involve also a
complex parameter $s$. In this introduction, $s=1$.

Figure \ref{fig:no9nolog}
plots the rescaled moments $(\frac{9}{8})^m(m+1)u_m$ for $1\leq m\leq \np{1000}$
(the value of $u_0$ is $10$).
\begin{figure}[htbp]
   \caption{$b=10$, ``no $9$'', $(\frac{9}{8})^m(m+1)u_m$ for $1\leq m \leq \np{1000}$}
\label{fig:no9nolog}
  \centering
\includegraphics[width=\linewidth]{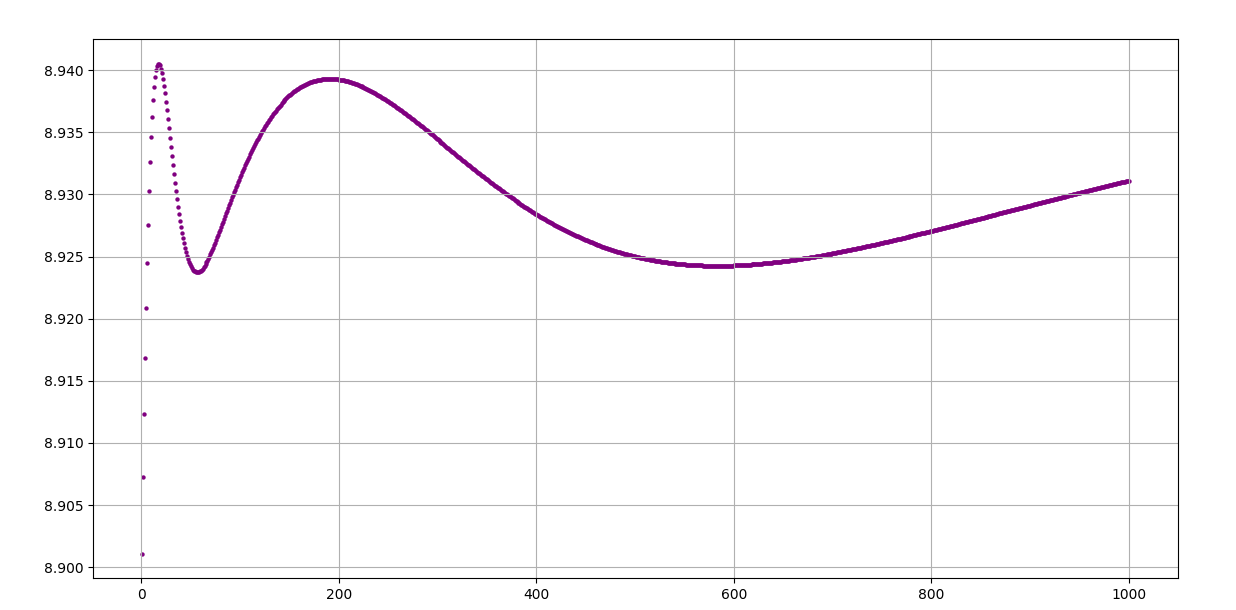}
\end{figure}
We see that, in accordance with general results of \cite{burnolkempner}, the
rescaled moments $(\frac{9}{8})^m(m+1)u_m$ appear to be bounded both above and
below.  Additionally, they seem to obey some oscillations which we can better
visualize if we use on the horizontal axis not $m$ but $\log_{10}(m)$.  See
Figure \ref{fig:no9}.
\begin{figure}[htbp]
   \caption{$b=10$, ``no $9$'', $(\frac{9}{8})^m(m+1)u_m$ vs $\log_{10}(m)$ for $1\leq m \leq \np{1000}$}
\label{fig:no9}
  \centering
\includegraphics[width=\linewidth]{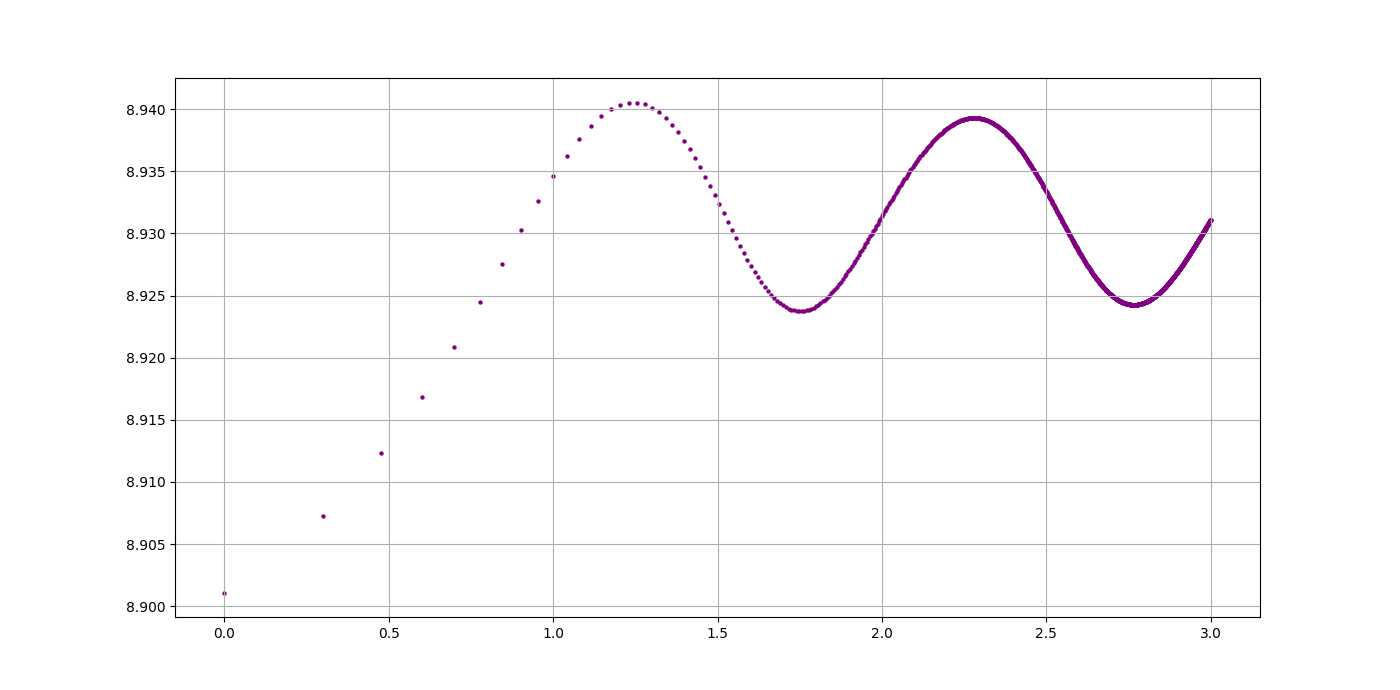}
\end{figure}
This clearly displays that as a function of $\log_{10}(m)$, the (rescaled)
moments are asymptotically $1$-periodic, a fact which may not have been
immediately obvious from the defining Equation \eqref{eq:intro1}:
$u_{10m}\approx (\frac{9}{8})^{-9m} 10^{-1}u_m$.

On second thoughts, we can understand on a simplified case the mechanism.
Instead of making
illegal the digit $9$, we, to the contrary, allow only it.  So now we are
looking at much simpler quantities:
\begin{equation}
  \label{eq:intro2}
    u_m = 0^m + \sum_{l=1}^\infty \bigl(1 - 10^{-l}\bigr)^m 10^{-l}.
\end{equation}
We can if we like expand the power via the binomial formula and obtain some
explicit finite expression for these new $u_m$'s, but the obtained
formula is unstable numerically, having very large (relatively to the full
expression) individual contributions which almost cancel one-another.  The
recurrence relation (see Equation \eqref{eq:umrec} in general) is stable
numerically, but does not look promising for the study of the asymptotic.
Rather, we examine more closely the original sum in Equation
\eqref{eq:intro2}.  We see that for large $m$, the initial terms are
exponentially small. Only when reaching $l$ of the order of $\log_{10}(m)$ do
we start having substantial contributions (of order $m^{-1}$ due to
$10^{-l}$). But at this stage, the power $(1 -10^{-l})^m$ will not differ much
from $\e^{-m 10^{-l}}$, and we thus expect that our $u_m$ will not differ much
from
\begin{equation}\label{eq:intro3}
  g(m) =  \sum_{l=0}^\infty \frac{\e^{-m10^{-l}}}{10^l}\,.
\end{equation}
Looking at the defining series for this $g(m)$, we see that adding terms with
negative $l$'s will hardly modify its value, as they are
exponentially small. So we now consider
\begin{equation}\label{eq:intro4}
  f(m) =  \sum_{l=-\infty}^\infty \frac{\e^{-m10^{-l}}}{10^l}\,,
\end{equation}
and observe that $mf(m)$ is invariant under the replacement of $m$ by $10m$.
Such functions as $g(m)$ and $f(m)$ have a long history in the literature
going all the way back to Hardy \cite{hardy1907}.  Here are some relevant
papers \cite{balamendsebb2005,mendsebb1999,keatread2000,zhang2011,zhang2023}
from a large literature.

As is to be expected, the general theory for the quantities defined by
Equation \eqref{eq:intro1} proves a bit more involved than the above sketch given
for the ``toy-model'' Equation \eqref{eq:intro2} (for graphics related to
this ``toy-model'', see Figures \ref{fig:1}, \ref{fig:2}, and \ref{fig:3}). In
addition to the appearance in the Fourier analysis of values of the Gamma
function on vertical lines in the complex plane, which are already present in
the ``toy-model'', the general case will also involve values of Hurwitz zeta
functions with missing digits.  At $s=1$, such series of the type $\sum'
(n+h)^{-1}$ with restrictions on the radix $b$ representation of the allowed
integers $n$ already arose a long time ago in the work of Fischer
\cite{fischer}.  Interestingly enough, the restriction on the digits is not
the same (in general) as the one which is imposed upon the original Riemann
zeta series $\sum' n^{-s}$ with missing digits: one now allows exactly those
digits $d=f-a$ where $f$ was the largest originally allowed
digit, and the $a$'s are the originally allowed ones.

Here is how the contents are organized: we start with a general Lemma on
moments of measures on the unit interval with a Hölder condition at $1$; then
we prove the asymptotic periodicity in $\log_b(m)$ (where $b$ is the used
radix) of suitably rescaled moments, in complete generality (Theorems
\ref{thm:main} and \ref{thm:main2}).  We use some results from our earlier
paper \cite{burnolzeta}, particularly those from its section \S3 which give
the asymptotic behavior of the exponential generating function of the moments.
But to a large extent our exposition here remains self-contained, for the
convenience of the reader. The last section of the paper is dedicated to
various examples, it contains nine figures and focuses on confirming the
expected numerical value of the asymptotic average.

\section{A lemma on moments}

This proposition is needed to establish Theorems \ref{thm:main} and
\ref{thm:main2} in the next section.  It is probably classical (and may even
be a textbook exercise, at least for the case of a measure with a continuous
density function) but the author could not locate a reference.
\begin{prop}\label{prop:1}
  Let $\mu$ be a complex  measure on (the Borel algebra of) $\Ifo01$, which
  has the following Hölder property at $1$, for some $\sigma\geq0$:
  \begin{equation}\label{eq:Hs}\tag{$H_\sigma$}
    \exists K<\infty\qquad 0\leq x<1\implies |\mu|\bigl(\Ifo x1\bigr) \leq K(1-x)^\sigma.
  \end{equation}
  Let, for $m$ real and non-negative
\[u_m = \int_{\Ifo 01} x^m \mu(\dx)\]
  be
  the (general) moments, and let
\[
E(t)= \int_{\Ifo 01}\e^{tx}\mu(\dx)
\]
 be the
  exponential generating function of the (integer) moments.  There holds:
  \begin{equation*}
    u_m = \e^{-m} E(m) + O_{m\to\infty}(\frac1{m^{1+\sigma}}).
  \end{equation*}
\end{prop}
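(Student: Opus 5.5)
We compare the two integrands directly. Write
\[
u_m - \e^{-m}E(m) = \int_{\Ifo01} \bigl(x^m - \e^{-m(1-x)}\bigr)\,\mu(\dx),
\]
so it is enough to bound $\int_{\Ifo01}|\phi_m(x)|\,|\mu|(\dx)$, where $\phi_m(x)=x^m-\e^{-m(1-x)}$. Set $y=1-x\in\Iof01$. Since $\log(1-y)\le -y$, we have $x^m\le \e^{-my}$, hence $|\phi_m|=\e^{-my}-(1-y)^m$. The plan is to bound $\phi_m$ pointwise by something small both near $y=0$ and for large $my$, and then integrate against $|\mu|$ using \eqref{eq:Hs}.

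\textbf{Pointwise bound.} For $0\le y\le 1/2$ we have $\log(1-y)\ge -y-y^2$, so
\[
0\le \e^{-my}-(1-y)^m \le \e^{-my}\bigl(1-\e^{-my^2}\bigr)\le my^2\e^{-my}.
\]
For $y\ge1/2$ both terms are at most $\e^{-m/2}$, which is negligible. Altogether
\[
|\phi_m(x)|\le C\,m y^2 \e^{-my} + \e^{-m/2}.
\]
Writing $t=my$, the main term is $\frac{1}{m}\,t^2\e^{-t}$. This makes explicit the gain of $1/m$ over the trivial size $\e^{-t}$.

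\textbf{Integration against $|\mu|$.} Let $F(y)=|\mu|(\Ifo{1-y}1)$, which is nondecreasing, bounded by $Ky^\sigma$, and has total mass $F(1)=|\mu|(\Ifo01)<\infty$. With $h(y)=y^2\e^{-my}$ we get, by Stieltjes integration by parts (or by Fubini, writing $h(y)=\int_0^y h'(s)\,ds$ and accounting for sign changes through $|h'|$),
\[
\int h(1-x)\,|\mu|(\dx)\le \int_0^1 |h'(y)|\,F(y)\,dy + h(1)F(1).
\]
The boundary term is $O(\e^{-m})$. Since $|h'(y)|\le(2y+my^2)\e^{-my}$,
\[
\int_0^1 |h'(y)|\,K y^\sigma\,dy \le K\int_0^\infty (2y^{1+\sigma}+my^{2+\sigma})\e^{-my}\,dy = O\bigl(m^{-2-\sigma}\bigr).
\]
Multiplying by the factor $m$ gives $O(m^{-1-\sigma})$. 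The $\e^{-m/2}$ term contributes $\e^{-m/2}|\mu|(\Ifo01)$, which is exponentially small. The point $y=0$ (that is, $x=1$) is excluded from $\Ifo01$, so no atom at $1$ causes trouble. This yields the claimed $O(m^{-1-\sigma})$.

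\textbf{Main obstacle.} The delicate step is the pointwise estimate $|\phi_m|\lesssim m y^2\e^{-my}$. It must retain the exponential decay $\e^{-my}$, so that integration against $y^\sigma$ produces the $m^{-\sigma}$ factor, while also vanishing to second order at $y=0$, which produces the extra $1/m$. A cruder bound such as $|\phi_m|\le my^2$ would lose the decay and fail. The remaining step, converting the Hölder condition on tail masses into a moment bound by integrating by parts against $F$, is routine once $|h'|$ is controlled as above.
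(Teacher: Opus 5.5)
Your proof is correct and is essentially the paper's argument. You use the same reduction to $|\mu|$, the same pointwise bound $0\le \e^{-my}-(1-y)^m\le my^2\e^{-my}$ for $0<y\le\frac12$ (discarding $y\ge\frac12$ as exponentially small), and the same Fubini step moving the derivative of $y^2\e^{-my}$ onto the tail mass $|\mu|(\Ifo{1-y}1)\le Ky^\sigma$ before rescaling $t=my$.

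There is one small slip in your parenthetical. For the Fubini route you must write $h(y)=h(1)-\int_y^1h'(s)\,ds$, as your boundary term $h(1)F(1)$ already indicates. Writing $h(y)=\int_0^yh'(s)\,ds$ instead would pair $h'(s)$ with $|\mu|(\Ifo0{1-s})$ rather than with $F(s)$, and bounding that with $|h'|$ would lose the factor $m^{-\sigma}$.
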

\begin{proof}
  There holds
\begin{equation*}
  \Bigl|\e^{-m} E(m) - u_m\Bigr| \leq \int_{\Ifo 01} \left| \e^{-m+mx}-x^m\right||\mu|(\dx)
= \int_{\Ifo 01} \left( \e^{-m+mx}-x^m\right)|\mu|(\dx),
\end{equation*}
as the integrand in the right-hand side is non-negative (even positive for
$m>0$). We are thus reduced to the case of a finite positive measure.

Let $\mu$ be such a positive measure, verifying \eqref{eq:Hs} for some $\sigma\geq0$.
There holds $u_m = O(2^{-m}) +
\int_{\Ifo{\frac12}1}x^m\mu(\dx)$ and $\e^{-m}E(m)=
\int_{\Ifo01}\e^{-m(1-x)}\mu(\dx) = O(\e^{-m/2}) +
\int_{\Ifo{\frac12}1}\e^{-m(1-x)}\mu(\dx)$ so the claim is reduced to
\begin{equation}
  \label{eq:1}
  \int_{\Ifo{\frac12}{1}}\bigl(\e^{-m(1-x)}- x^m\bigr)\mu(\dx) = O(\frac1{m^{1+\sigma}}).
\end{equation}
There holds for any positive real number $m$ and $x\in \Ifo{\frac12}1$:
  \begin{equation}
    \label{eq:2}
    0< \e^{-m(1-x)} - x^m < m(1-x)^2 \e^{-m(1-x)}.
  \end{equation}
  Let us give a proof for this textbook inequality.
  Let $h=1-x$, $0<h\leq \frac12$. We want
\begin{equation*}
  0< 1 - (1-h)^m\e^{mh} < mh^2.
\end{equation*}
The middle term is $1 - e^{-y}$ with
\[
y = - m \log(1-h) - mh
= m\Bigl(\frac12h^2+\frac13h^3+\dots\Bigr)<m \frac{h^2}{2(1-h)}\leq m h^2.
\]
Hence $0<y<mh^2$ and $0<1 - \e^{-y}<y<mh^2$.  This proves \eqref{eq:2}.

We are now reduced to show:
\begin{equation}
  \label{eq:3}
  \int_{\Ifo{\frac12}{1}}(1-x)^2\e^{-m(1-x)} \mu(\dx)= O(\frac{1}{m^{2+\sigma}}).
\end{equation}
Let $\psi_m(x)=\ddx \left((1-x)^2\e^{-m(1-x)}\right) = \bigl(m(1-x)-2\bigr)(1-x)\e^{-m(1-x)}$.
Up to an exponentially small term, the left-hand-side of \eqref{eq:3} is
\begin{equation*}
  \int_{\Ifo{\frac12}{1}}\left((1-x)^2\e^{-m(1-x)}-\frac{\e^{-m/2}}4\right) \mu(\dx)
= \int_{\Ifo{\frac12}{1}}\Bigl(\int_{\frac12}^x\psi_m(y)\dy\Bigr)\mu(\dx).
\end{equation*}
Using Fubini and the hypothesis \eqref{eq:Hs}:
\begin{equation*}
\left| \int_{\Ifo{\frac12}{1}}\Bigl(\int_{\frac12}^x\psi_m(y)\dy\Bigr)\mu(\dx)\right |
= 
  \left|\int_{\frac12}^1 \psi_m(y)\mu(\Ifo y1)\dy\right| 
\leq K \int_{\frac12}^1|\psi_m(y)|(1-y)^\sigma\dy.
\end{equation*}
The left-hand side of \eqref{eq:3} is thus bounded by the sum of some
$O(\e^{-m/2})$ with
\begin{equation*}
  K \int_0^{\frac12}|mt-2| t^{1+\sigma}\e^{-mt}\dt 
= K m^{-2-\sigma}\int_0^{\frac{m}2}|u-2|u^{1+\sigma}\e^{-u}\du,
\end{equation*}
and the proof of \eqref{eq:3}, hence of the Proposition, is complete.
\end{proof}

\section{Missing digits and associated moments}

Let $b>1$ be an integer and let $A\subset\{0,\dots,b-1\}$ be a non-empty
subset of the set of radix-$b$ digits, of cardinality $N$.  Let $f=\max A$.
We assume $f>0$.

In order to simplify the presentation, we also assume provisorily:
\begin{equation*}
  f = b-1.
\end{equation*}
The additional discussion needed if $f<b-1$ will be provided only at the end
of this section after the proof of Theorem \ref{thm:main}.

We say that a positive integer is \emph{$(b,A)$-admissible}, in short
\emph{admissible} if its radix-$b$ representation uses only digits from $A$.
As zero is represented by the empty word, we consider it also to be
admissible (whether or not the digit $0$ is in $A$).

We want to discuss further some properties of the moments
$u_m(s)$ which are defined in \cite[\S2]{burnolzeta} as
\begin{equation*}
  u_m(s) = \int_{[0,1)}x^m \mu_s(\dx),
\end{equation*}
where $\mu_s$ is some complex measure and $s$ is a complex number of real part
greater than $s_0 = \log_bN$. The discrete complex measure $\mu_s$ is defined
on the Borel algebra of $\Ifo01$ in the following manner: first, we put a
Dirac unit mass at zero. Then we add Dirac masses with (complex) weights
$b^{-s}$ at each $ab^{-1}$, $a\in A$. Then we add Dirac masses with weights
$b^{-2s}$ at each $(a_1b+a_2)b^{-2}$, $(a_1,a_2)\in A^2$, and so on with
$A^l$, $l\to\infty$.  This defines a complex measure whose zero-th moment
$\mu_s\bigl(\Ifo01\bigr)$ is $b^s/(b^s-N)$ (\cite[\S2]{burnolzeta}).
Indeed:
\begin{equation*}
  1 + \sum_{l=1}^\infty \sum_{(a_1,a_2,\dots,a_l)\in A^l} b^{-ls}
=
  \sum_{l=0}^\infty N^lb^{-sl} 
= \frac{b^s}{b^s-N}\,.
\end{equation*}
We need the following:
\begin{lem}\label{lem:1}
  The complex measure $\mu_s$ on $\Ifo 01$ verifies hypothesis \eqref{eq:Hs}
  from Proposition \ref{prop:1}, with $\sigma=\Re s$.
\end{lem}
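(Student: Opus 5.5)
Since all weights of $\mu_s$ are complex numbers of modulus $b^{-l\Re s}$, the total variation measure $|\mu_s|$ is the positive discrete measure with masses $b^{-l\sigma}$ ($\sigma=\Re s$) at the points $n b^{-l}$, $n$ admissible with at most $l$ digits. Indeed, distinct tuples $(a_1,\dots,a_l)\in A^l$ give distinct points $\sum a_ib^{-i}$ for fixed $l$. Different levels $l$ may place mass at the same point, for instance when $0\in A$. In that case the mass of $|\mu_s|$ at such a point is at most the sum of the moduli of the contributions. It therefore suffices to bound, for $0\le x<1$,
\[
\sum_{l\geq0} b^{-l\sigma}\,\#\Bigl\{(a_1,\dots,a_l)\in A^l : \sum_{i=1}^l a_ib^{-i}\geq x\Bigr\}
\]
by $K(1-x)^\sigma$. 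Since $\sigma>\log_b N\ge 0$, the whole series converges, with value at most $b^\sigma/(b^\sigma-N)$. Hence for $x$ bounded away from $1$ the bound is trivial, and we may assume $1-x$ is small. Choose $k\geq1$ with $b^{-k-1}< 1-x\leq b^{-k}$.

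The key observation: a point $y=\sum_{i\le l}a_ib^{-i}\in\Ifo01$ satisfies $y\ge x> 1-b^{-k-1}\cdot b$ only if $1-y\le 1-x\le b^{-k}$. For $l\le k$ we have $y\le 1-b^{-l}$ hence $1-y\geq b^{-l}\geq b^{-k}$, forcing $y=1-b^{-k}$ exactly with $l=k$; in any case this is at most one tuple per level, at level $l\leq k$ only when $l=k$. For $l>k$, $1-y<b^{-k}$ forces $a_1=\dots=a_k=b-1$: otherwise $y\le 1-b^{-k}$, using $f=b-1$ to bound the tail by $b^{-k}-b^{-l}$. So the count at level $l>k$ is at most $N^{l-k}$, the remaining digits being free.

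Summing, the total mass of $|\mu_s|$ on $\Ifo x1$ is at most
\[
b^{-k\sigma}+\sum_{l>k}b^{-l\sigma}N^{l-k}=b^{-k\sigma}\Bigl(1+\sum_{j\ge1}N^jb^{-j\sigma}\Bigr)=b^{-k\sigma}\frac{b^\sigma}{b^\sigma-N}.
\]
Since $b^{-k}<b(1-x)$, this is at most $\frac{b^{2\sigma}}{b^\sigma-N}(1-x)^\sigma$, which gives \eqref{eq:Hs}.

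The main care point is the digit-prefix argument, namely getting the boundary case $y=1-b^{-k}$ right and handling coincident points across levels. Both issues only affect constants.
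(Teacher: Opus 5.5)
Your proof is correct and is essentially the paper's argument. The paper shows that the mass of $\Ifo{1-b^{-k}}1$ comes exactly from levels $l\ge k$ with $a_1=\dots=a_k=b-1$, so $\mu_s\bigl(\Ifo{1-b^{-k}}1\bigr)=b^{-ks}\mu_s\bigl(\Ifo01\bigr)$; it then passes to $\mu_\sigma$ and interpolates between the grid points $1-b^{-k}$, reaching the same constant $b^{2\sigma}/(b^\sigma-N)$ as you. Counting directly for general $x$ is only a cosmetic difference, and you are more careful than the paper, which asserts $|\mu_s|=\mu_\sigma$ although only $|\mu_s|\le\mu_\sigma$ holds in general when $0\in A$ (and only that is needed).

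Your level $l>k$ step has a harmless slip. There you only know $1-y\le b^{-k}$, not $1-y<b^{-k}$, and you should read $x\ge 1-b^{-k}$ in your key observation. This does not matter, because a prefix other than $(b-1,\dots,b-1)$ actually gives $y\le 1-b^{-k}-b^{-l}<1-b^{-k}$.
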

\begin{proof}
  Let $k\geq0$, we evaluate $\mu_s\bigl(\Ifo{1-b^{-k}}1\bigr)$ (we will need
  the result only for $s$ real, but it is worthwhile to prove it in general).
  Recall that we assume $b-1\in A$. For $l\geq0$ and $(a_1,\dots,a_l)\in A^l$
  to contribute weight to the interval $\Ifo{1-b^{-k}}1$, it is necessary and
  sufficient that
  \begin{equation*}
    \frac{a_1}b + \dots + \frac{a_l}{b^l} \geq 1 - b^{-k} = \frac{b-1}b + \dots + \frac{b-1}{b^k}\,.
  \end{equation*}
  This happens if and only if $l\geq k$ and $a_1 = \dots = a_k = b-1$. The
  digits $a_q$ for $k<q\leq l$ are thus arbitrary in $A$.  The weight
  contributed by $(b-1,\dots,b-1,a_{k+1},\dots,a_l)$ is $b^{-ks}$ times the
  weight which $(a_{k+1},\dots,a_l)$ contributes to the mass of $\Ifo01$.  So
  we obtain;
\begin{equation}\label{eq:mutail}
  \mu_s\bigl(\Ifo{1-b^{-k}}1\bigr) = b^{-ks}\mu_{s}\bigl(\Ifo 01\bigr).
\end{equation}
The measure of variations of $\mu_s$ is $\mu_\sigma$ with $\sigma=\Re s$.
Equation \eqref{eq:mutail} applied with $s=\sigma$ says that
$\mu_{\sigma}\bigl(\Ifo {y_k}1\bigr) = K(1-y_k)^\sigma$ for some $K$ and any
$y_k=1-b^{-k}$, $k\geq0$.  Let $0\leq x<1$ and $k$ be the largest such that
$y_k\leq x$. Then $1-y_k\geq 1-x>\frac{1-y_k}{b}$, hence $\mu_\sigma\bigl(\Ifo
x1\bigr)\leq K(1-y_k)^\sigma\leq Kb^\sigma (1-x)^\sigma$, which completes the
check of \eqref{eq:Hs}.
\end{proof}

We are interested into the moments of the measure $\mu_s$:
\begin{equation}\label{eq:umdef}
  u_m(s) = 0^m + \sum_{l=1}^\infty 
   \sum_{(a_1,a_2,\dots,a_l)\in A^l} \Bigl(a_1b^{-1}+\dots +a_lb^{-l}\Bigr)^m b^{-ls}.
\end{equation}
Their precise numerical evaluation is not easy, except if $\# A=N = 1$, due,
if $N>1$, to exponentially many contributions to each term.  This can be
mitigated to some extent, for integer moments, via the recurrence formula from
\cite[Prop. 1]{burnolzeta}:
\begin{equation}\label{eq:umrec}
m\geq1\implies
  u_m(s) = \frac1{b^{m+s} - N}\sum_{j=1}^m \binom{m}{j}\Bigl(\sum_{a\in A} a^j\Bigr) u_{m-j}(s).
\end{equation}
The proof of the main theorem will not use this recurrence relation though.
\begin{theo}\label{thm:main}
  Let $B$ be the set $f-A = \{b-1 - a, a \in A\}$ (we assume $\max A=b-1$) and let
  $\cB$ be the set of $B$-admissible (non-negative) integers.  There holds for
  $m$ real going to $+\infty$:
\begin{equation}\label{eq:main1}
m^s u_m(s)
=
\underbrace{(\log b)^{-1}\sum_{k=-\infty}^\infty 
  \e^{2\pi i k \log_b(m)}
\Gamma(s - 2\pi i \frac k{\log b})  \sum_{n\in\cB}\frac{1}{(n + 1)^{s-2\pi i \frac k{\log b}}}}
_{F_s(m)}
{}+ O(\frac{1}{m})
\end{equation}
which represents $m^s u_m(s)$ as the sum of a smooth $1$-periodic function of
$\log_b(m)$ and an $O(m^{-1})$ error term.  So, the sequence of integer moments
$(m^s u_m(s))_{m\geq0}$ is asymptotic to a sequence $(F_s(m))$ invariant under
$m\mapsfrom bm$, which can also be written as
\begin{equation}\label{eq:main2}
F_s(m) = m^s \sum_{j=-\infty}^\infty b^{js}\e^{-b^j m}\sum_{n\in \cB}\e^{-nb^j m}.
\end{equation}
The asymptotic average value for the sequence of integer moments is:
\begin{equation}
  \label{eq:main3}
  \langle m^su_m(s)\rangle \coloneq \lim_{p\to\infty}\sum_{b^p<m\leq b^{p+1}}m^su_m(s)\log_b{\frac{m}{m-1}} = \frac{\Gamma(s)}{\log b}\sum_{n\in \cB}\frac{1}{(n+1)^s}\,.
\end{equation}
\end{theo}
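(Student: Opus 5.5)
By Proposition \ref{prop:1} and Lemma \ref{lem:1}, we have $u_m(s)=\e^{-m}E_s(m)+O(m^{-1-\sigma})$ with $\sigma=\Re s$. Multiplying by $m^s$ turns the error into $O(m^{-1})$. The whole task is therefore to analyse $m^s\e^{-m}E_s(m)$, where $E_s(t)=\int\e^{tx}\mu_s(\dx)$.

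\textbf{Step 1: reorganise $\e^{-t}E_s(t)$ by levels.} The atoms of $\mu_s$ at level $l$ sit at $x=n'/b^l$ with $n'$ an $A$-admissible word of length $l$ (leading zeros allowed), and each carries weight $b^{-ls}$. Writing $1-x=(b^l-1-n')/b^l+b^{-l}$ and $n=b^l-1-n'$, the map $n'\mapsto n$ is the digitwise complement $a\mapsto b-1-a$. It is a bijection from $A^l$ onto the $B$-admissible integers $n<b^l$, where $B$-admissible integers below $b^l$ correspond exactly to words in $B^l$ once leading zeros are allowed. Hence
\begin{equation*}
\e^{-t}E_s(t)=\sum_{l\geq0}b^{-ls}\e^{-tb^{-l}}\sum_{n\in\cB,\,n<b^l}\e^{-ntb^{-l}}.
\end{equation*}

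\textbf{Step 2: complete the sums.} Dropping the constraint $n<b^l$ costs at most $\sum_l b^{-l\sigma}\sum_{n\geq b^l}\e^{-nt b^{-l}}$. Since $n\geq b^l$ forces $n tb^{-l}\geq t$, and we can split $\e^{-ntb^{-l}}\leq \e^{-t/2}\e^{-ntb^{-l}/2}$, this error is $O(\e^{-t/2}\,\mathrm{poly}(t))$. Adding the terms $l<0$ (that is, $j=-l>0$) costs $\sum_{j>0}b^{j\sigma}\e^{-tb^j}$, which is also exponentially small. Multiplying by $t^s$ and setting $j=-l$ gives exactly $F_s(m)$ in the form \eqref{eq:main2}, and $F_s$ is visibly invariant under $m\mapsto bm$.

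\textbf{Step 3: Fourier expansion.} Write $F_s(m)=\Phi(\log_b m)$ with
\begin{equation*}
\Phi(y)=\sum_{j\in\mathbb Z}b^{(y+j)s}h(b^{y+j}),\qquad h(x)=\sum_{n\in\cB}\e^{-(n+1)x}.
\end{equation*}
This is a periodization, so its $k$-th Fourier coefficient equals
\begin{equation*}
\int_{\mathbb R}b^{ys}h(b^y)\e^{-2\pi iky}\dy=(\log b)^{-1}\int_0^\infty x^{s-2\pi ik/\log b-1}h(x)\dx.
\end{equation*}
Integrating termwise gives $(\log b)^{-1}\Gamma(s_k)\sum_{n\in\cB}(n+1)^{-s_k}$ with $s_k=s-2\pi ik/\log b$. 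These Dirichlet series converge absolutely because $\Re s>\log_b N$, and $\#B=N$ so the counting function of $\cB$ is of order $x^{\log_bN}$. The $\Gamma$ factor decays exponentially in $|k|$, so the Fourier series converges absolutely and $\Phi$ is smooth. This proves \eqref{eq:main1}.

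\textbf{Step 4: the average \eqref{eq:main3}.} The weights $\log_b(m/(m-1))$ for $b^p<m\leq b^{p+1}$ sum exactly to $1$. The sum of $\Phi(\log_b m)\log_b\frac{m}{m-1}$ is a Riemann sum for $\int_0^1\Phi$ with mesh $O(b^{-p})$, so it tends to $\int_0^1\Phi$. Meanwhile the $O(m^{-1})$ errors contribute $O(b^{-p})$ in total. Hence the limit is the $k=0$ Fourier coefficient, which is the stated value.

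I expect the main obstacle to be Step 1: checking carefully that the complement bijection correctly handles leading zeros and the admissibility of $0$, and that the shift by $b^{-l}$ is what produces the $n+1$. The uniform control of the tails in Step 2 with complex $s$ is routine but must be written carefully. The last part of the theorem also needs some care, since the integer points $m$ sample $\Phi$ nonuniformly in $\log_b m$; the weights $\log_b\frac m{m-1}$ are exactly the spacings in the $\log_b$ variable, and the uniform continuity of $\Phi$ is what justifies the Riemann-sum step.
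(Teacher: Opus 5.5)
Your argument is correct, with one point in Step~2 that needs tightening. Its skeleton is the paper's: reduction to $m^s\e^{-m}E(m)$ via Proposition~\ref{prop:1} and Lemma~\ref{lem:1}, then the digit complement $a\mapsto b-1-a$. Your bijection with $\{n\in\cB: n<b^l\}$ uses $0\in B$, which holds because $b-1\in A$. After that come completion of the level-$l$ sums to full sums over $\cB$, and addition of the exponentially small terms with $l<0$.

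The real difference is the completion step. You bound the tail additively, and there you must keep the restriction $n\in\cB$ explicit, which your display drops. Over all integers $n\geq b^l$ the inner sum is at least $\e^{-t}b^l/t$. Then $\sum_l b^{-l\sigma}\e^{-t}b^l/t$ diverges for $\sigma\leq1$, in particular at $s=1$.

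With $n\in\cB$ kept, you still need a growth bound to sum over $l$, such as $\sum_{n\in\cB}\e^{-ny}=O(1+y^{-\log_bN})$ as $y\to0^+$. This follows from $\#\{n\in\cB:n<b^k\}=N^k$.

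The paper needs no such estimate. Each $n\in\cB$ splits uniquely as $n''b^l+n'$ with $n',n''\in\cB$ and $n'<b^l$, so $\gamma_B(-m)\prod_{1\leq k\leq l}\alpha_B(-mb^{-k})=\gamma_B(-mb^{-l})$. Multiplying $\e^{-m}E(m)$ by the single factor $\gamma_B(-m)=1+O(\e^{-Dm})$ therefore completes every level exactly. In your notation, the tail equals $(\gamma_B(-t)-1)\sum_{n\in\cB,\,n<b^l}\e^{-ntb^{-l}}\leq(\gamma_B(-t)-1)N^l$, which is summable against $b^{-l\sigma}$.

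Your Step~3 (periodization plus the Mellin integral giving $\Gamma(s_k)(n+1)^{-s_k}$) is a self-contained replacement for the Fourier computation the paper cites from its companion paper. Your Step~4 supplies the Riemann-sum argument the paper leaves to the reader. Both are sound.
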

\begin{proof}
  According to Lemma \ref{lem:1} and Proposition \ref{prop:1}, $m^s
  u_m(s) = m^s \e^{-m}E(m) + O(m^{-1})$, so the matter is reduced to the
  asymptotic of $m^s \e^{-m}E(m)$.  But this is exactly what was studied in
  \cite[\S3, Prop.\@ 7]{burnolzeta}.  To be self-contained we explain again
  the simple steps.  Define first
  \begin{equation*}
    \alpha_B(t) = \sum_{d\in B}\e^{dt}.
  \end{equation*}
Observe that $0<\alpha_B(t)\leq N$ for $t\leq 0$.
  The moment generating function is (for $m$ real positive):
  \begin{align*}
E(m) &=  1 + \sum_{l=1}^\infty 
   \sum_{(a_1,a_2,\dots,a_l)\in A^l} \e^{m(a_1b^{-1}+\dots +a_lb^{-l})} b^{-ls}\\
&=1 + \sum_{l=1}^\infty 
   \sum_{(d_1,d_2,\dots,d_l)\in B^l} \e^{m(1 - b^{-l}-d_1b^{-1}-\dots -d_lb^{-l})} b^{-ls}\\
\e^{-m}E(m) &= \sum_{l=0}^\infty 
   \bigl(\prod_{1\leq k\leq l}\alpha_B(-mb^{-k})\bigr) \e^{- mb^{-l}}b^{-ls}.
  \end{align*}
  Note that for $m\geq 0$, this series is bounded term-wise by $N^l b^{-l\Re s}$
  (as it should be from where it originates from) and convergence is indeed
  ensured.  We now introduce the infinite product
\begin{equation*}
  \gamma_B(-t) = \prod_{i=0}^\infty \alpha_B(-b^it).
\end{equation*}
Here it is important that $0\in B$ so $\alpha_B(-b^i t) = 1 + O(\e^{-b^iD t})$
where $D$ is the smallest positive element of $B$ (if one such exists, else
$\alpha_B$ is the constant $1$) and the product is absolutely convergent for
$\Re t>0$. We will use it only with $t$ real positive and certainly
$\gamma_B(-t) = 1 + O_{t\to+\infty}(\e^{-Dt})$ (or the constant $1$ if
$B=\{0\}$).  Looking closer we recognize that the infinite product has a
series expansion in powers of $\e^{-t}$:
\begin{equation*}
  \gamma_B(-t) = \sum_{n\in \cB} \e^{-nt}
\end{equation*}
where $\cB$ is the set of $B$-admissible non-negative integers (which is reduced
to the singleton $\{0\}$ if $B=\{0\}$, else has $D$ as minimal positive element).
We then obtain the formula:
\begin{equation*}
  \gamma_B(-m)\e^{-m}E(m) =  \sum_{l=0}^\infty \gamma_B(-mb^{-l})\e^{-mb^{-l}}b^{-ls}.
\end{equation*}
This leads to the definition of the $1$-periodic (in $\log_b(m)$) smooth function:
\begin{equation*}
  F_s(m) = m^s\sum_{l=-\infty}^\infty \gamma_B(-b^{-l}m)\e^{-mb^{-l}}b^{-ls},
\end{equation*}
where the added terms for negative $l$ are exponentially small as functions of $m$.
Hence
\begin{equation*}
  m^s\e^{-m}E(m) = (1 + O(\e^{-Dm}))\Bigl(F_s(m) - O(m^{\Re s}\e^{-mb})\Bigr).
\end{equation*}
So $m^s\e^{-m}E(m)$ differs from $F_s(m)$, which is invariant under
$m\mapsfrom bm$, by an exponentially small function of $m\to+\infty$.  We can
represent $F_s(m)$ as a Fourier series in the variable $\log_b(m)$. The
computation of the Fourier coefficients is given in the proof of \cite[Prop.\@
7]{burnolzeta} and gives the underbraced expression in \eqref{eq:main1}.

So we have established \eqref{eq:main1} and \eqref{eq:main2}.  As per the fact
that the middle term of \eqref{eq:main3} converges to the zeroth Fourier
coefficient whose value is given in \eqref{eq:main1} and gives the third term,
it is a simple exercise of Riemann sums, which is left to the reader.  We can
use $\log_b(1+\frac1m)$ in place of $\log_b(m/(m-1))$ if one prefers, but
preferably then summing from $b^p$ to $b^{p+1}-1$ so that this is really a
barycentric average: this is better numerically.
\end{proof}

We now lift the restriction on the value of $f=\max A$.  Let
$\kappa=\frac {b-1}f$. In place of directly $u_m(s)$ we want to consider the
rescaled $\kappa^mu_m(s)$ which are the moments of the measure obtained by
keeping the same individual weights as in $\mu_s$ but multiplying all rational
numbers in its support by $\kappa$, so that their supremum is now at $1$ and not
$f/(b-1)$. Let $\nu_s$ be this new complex measure. We leave to the reader to
check that this $\nu_s$ verifies \eqref{eq:Hs} from Proposition \ref{prop:1}
with $\sigma=\Re s$.  Hence $\kappa^m m^s u_m(s)$ differs by $O(m^{-1})$ from
$m^s\e^{-m}$ times the generating function for the $\nu_s$ moments, evaluated
at $m$, which is $E(\kappa m)$.  We are thus reduced to study $m^s
\e^{-m} E(\kappa m)$, where $E$ is the generating function of the moments of
$\mu_s$. And:
  \begin{align*}
E(\kappa m) &=  1 + \sum_{l=1}^\infty 
   \sum_{(a_1,a_2,\dots,a_l)\in A^l} \e^{\kappa m(a_1b^{-1}+\dots +a_lb^{-l})} b^{-ls}\\
&=1 + \sum_{l=1}^\infty 
   \sum_{(d_1,d_2,\dots,d_l)\in B^l} \e^{\kappa m \bigl( (1- b^{-l})\frac f{b-1} -d_1b^{-1}-\dots -d_lb^{-l}\bigr)} b^{-ls}\\
m^s\e^{-m}E(\kappa m) &= \sum_{l=0}^\infty 
   \bigl(\prod_{1\leq k\leq l}\alpha_B(-\kappa mb^{-k})\bigr) \e^{- mb^{-l}}(mb^{-l})^{s}\\
m^s\gamma_B(-\kappa m)\e^{-m}E(\kappa m)&=
\sum_{l=0}^\infty 
   \gamma_B(-\kappa mb^{-l})\e^{- mb^{-l}}(mb^{-l})^{s}
  \end{align*}
In the last two equations we have defined the set $B=f-A$, the functions $\alpha_B$ and $\gamma_B$ as in the proof of Theorem \ref{thm:main}.

  In order to match the notations from \cite[\S3, eq.(21)]{burnolzeta} we
  now define:
\begin{equation*}
  F_s(t) = \sum_{l=-\infty}^\infty \gamma_B(-tb^{-l})\e^{- \frac{f}{b-1}tb^{-l}}(t b^{-l})^{s}.
\end{equation*}
This function is $1$-periodic in $\log_b(t)$. Similarly to the earlier situation,
when we had $\kappa=1$, we observe that $m^s\e^{-m}E(\kappa m)$ is exponentially close to
$\kappa^{-s}F_s(\kappa m)$.  So $\kappa^m m^s u_m(s)$ is $O(m^{-1})$ close to
the periodic function $\kappa^{-s}F_s(\kappa m)$.  Taking now into account
the Fourier series given in \cite[Prop.\@ 7]{burnolzeta} we end up with this
more general version of Theorem \ref{thm:main}.
\begin{theo}\label{thm:main2}
  Let $A\subset\{0,\dots,b-1\}$, $f=\max A>0$, $B = \{f - a, a \in A\}$ and
  $\cB$ be the set of $B$-admissible (non-negative) integers.  Let
  $\kappa=\frac{b-1}f$. Then for $m$ real going to
  $+\infty$, $\kappa^m m^s u_m(s)$ differs by $O(m^{-1})$ from a smooth
  $1$-periodic function in $\log_b(m)$ which is given as
\begin{equation*}
    (\log b)^{-1}\sum_{k=-\infty}^\infty 
    \e^{2\pi i k \log_b(m)}
    \Gamma(s - 2\pi i \frac k{\log b})
    \sum_{n\in\cB}\frac{1}{(\kappa n + 1)^{s-2\pi i \frac k{\log b}}},
\end{equation*}
or equivalently
as
\begin{equation*}
m^s \sum_{j=-\infty}^\infty b^{js}\sum_{n\in \cB}\e^{-(\kappa n+1)b^j m}.
\end{equation*}
The asymptotic average value for the sequence of scaled integer moments is:
\begin{equation*}
  \langle \kappa^m m^su_m(s)\rangle = \frac{\Gamma(s)}{\log b}\sum_{n\in \cB}\frac{1}{(\kappa n+1)^s}\,.
\end{equation*}
\end{theo}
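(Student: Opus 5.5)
We reduce to the situation already handled in the proof of Theorem \ref{thm:main}. The steps are: transfer the moments to the rescaled measure, identify the resulting periodic function, and compute its Fourier coefficients.

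\textbf{Step 1: the rescaled measure.} Let $\nu_s$ be the image of $\mu_s$ under $x\mapsto\kappa x$. Its moments are $\kappa^m u_m(s)$ and its exponential generating function is $E(\kappa m)$.

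We check that $\nu_s$ satisfies \eqref{eq:Hs} with $\sigma=\Re s$, exactly as in Lemma \ref{lem:1}. The supremum of the support is now $1=\kappa\cdot f/(b-1)$. An admissible expansion $\sum a_qb^{-q}$ satisfies $\kappa\sum a_qb^{-q}\geq \kappa\frac f{b-1}(1-b^{-k})$ if and only if $l\geq k$ and $a_1=\dots=a_k=f$. This uses that every digit is at most $f$ and that the tail after position $k$ is at most $\frac f{b-1}b^{-k}$, with equality impossible for finite words. Hence $\nu_s(\Ifo{1-b^{-k}}1)=b^{-ks}\nu_s(\Ifo01)$. Interpolating between the points $1-b^{-k}$ costs a factor $b^\sigma$, as in the lemma.

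Proposition \ref{prop:1} then gives
\[
\kappa^m m^s u_m(s)=m^s\e^{-m}E(\kappa m)+O(m^{\Re s}m^{-1-\Re s}),
\]
and the error term is $O(m^{-1})$.

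\textbf{Step 2: identifying the periodic function.} Use the identity displayed before the statement:
\[
m^s\gamma_B(-\kappa m)\e^{-m}E(\kappa m)=\sum_{l\geq0}\gamma_B(-\kappa mb^{-l})\e^{-mb^{-l}}(mb^{-l})^s .
\]
Since $0\in B$, we have $\gamma_B(-\kappa m)=1+O(\e^{-D\kappa m})$. Extending the sum to all $l\in\mathbb Z$ adds terms bounded by $N^{|l|}$-free quantities $\e^{-mb^{|l|}}(mb^{|l|})^{\Re s}$, which are exponentially small. So $m^s\e^{-m}E(\kappa m)$ is exponentially close to
\[
G(m)=\sum_{l\in\mathbb Z}\gamma_B(-\kappa mb^{-l})\e^{-mb^{-l}}(mb^{-l})^s .
\]
This $G$ is manifestly invariant under $m\mapsto bm$. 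Expanding $\gamma_B(-t)=\sum_{n\in\cB}\e^{-nt}$ and setting $j=-l$ gives
\[
G(m)=m^s\sum_j b^{js}\sum_{n\in\cB}\e^{-(\kappa n+1)b^jm},
\]
which is the second stated form.

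\textbf{Step 3: Fourier coefficients.} Write $m=b^{x}$. Then $G$ is $1$-periodic and smooth in $x$, since the series and its derivatives converge locally uniformly. Its $k$-th coefficient is
\[
\int_0^1 G(b^x)\e^{-2\pi ikx}\,dx .
\]
Unfolding the sum over $j$ turns $\sum_j\int_0^1$ into $\int_{\mathbb R}$. Substituting $t=b^x$, so that $dx=dt/(t\log b)$, we get
\[
(\log b)^{-1}\sum_{n\in\cB}\int_0^\infty t^{s-2\pi ik/\log b}\e^{-(\kappa n+1)t}\frac{dt}{t}
=(\log b)^{-1}\Gamma\Bigl(s-\tfrac{2\pi ik}{\log b}\Bigr)\sum_{n\in\cB}(\kappa n+1)^{-s+2\pi ik/\log b}.
\]
This is the stated Fourier series.

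\textbf{Main obstacle: justifying the unfolding.} Fubini requires absolute convergence of $\sum_{n\in\cB}(\kappa n+1)^{-\Re s}$. The set $\cB$ has counting function $\ll X^{\log_b N}$, so this holds because $\Re s>\log_b N$, which is the standing assumption. The Fourier series converges absolutely because $\Gamma$ decays exponentially on vertical lines.

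\textbf{The average.} The weights $\log_b\frac m{m-1}$ over $b^p<m\leq b^{p+1}$ form a partition of the $x$-interval $[p,p+1]$ with mesh $O(b^{-p})$. The sum is therefore a Riemann sum for $\int_0^1G(b^x)\,dx$, up to the $O(m^{-1})$ error, since the weights sum to $1$. It converges to the $k=0$ coefficient, which is $\frac{\Gamma(s)}{\log b}\sum_{n\in\cB}(\kappa n+1)^{-s}$.
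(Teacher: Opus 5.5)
Your proposal is correct and follows the paper's route. You push $\mu_s$ forward by $x\mapsto\kappa x$, check $(H_\sigma)$ as in Lemma 1, apply Proposition 1, and use the $\gamma_B$ identity to get exponential closeness to the periodic function $\kappa^{-s}F_s(\kappa m)$ (your $G$). The only difference is that you carry out explicitly the unfolding/Gamma-integral computation of the Fourier coefficients (with the Fubini justification via $\Re s>\log_b N$) and the Riemann-sum argument for the average, whereas the paper cites the former from its earlier work and leaves the latter to the reader.
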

In the next section we give some numerical results which proved all to be in
accordance with Theorems \ref{thm:main} and \ref{thm:main2}.

\section{Numerical and graphical examples}


\noindent\textbf{Example 1.} We suppose $N=1$, so $A= \{b-1\}$ and we take $s=1$. So:
\begin{equation*}
  u_m(1) = 0^m + \sum_{l=1}^\infty (1 -b^{-l})^m b^{-l}.
\end{equation*}
Theorem \ref{thm:main} says that $(mu_m(1))$ is asymptotic to a sequence invariant under
$m\mapsfrom bm$, and whose oscillations are around an average value equal to
$(\log b)^{-1} \sum_{n\in\cB}\frac1{n+1}$. But here $B = \{0\}$ and the sole
$B$ admissible integer is $0$.  Hence this average value is $(\log
b)^{-1}$. The asymptotic periodicity as function of $\log_b(m)$ and value of
the average are well confirmed numerically.  It is known (see
\cite{burnolkempner} or \cite{burnolzeta}) that $u_m(1)=O(m^{-1})$, so the
$O(m^{-1})$ error term from Equation \eqref{eq:main1} hides the difference
between $mu_m(1)$ and $(m+1)u_m(1)$ but the latter is seen to fit better the
asymptotic in the sense that the difference with $F_1(m)$, multiplied by $m$,
is seen numerically to average to zero (but definitely has itself $O(1)$
oscillations, so the $O(m^{-1})$ from Equation \eqref{eq:main1} can not be
replaced by a smaller big-O, even if replacing $mu_m(1)$ by $(m+1)u_m(1)$).

Figures \ref{fig:1}, \ref{fig:2}, \ref{fig:3} are respectively with
$b=2$, $b=3$ and $b=10$.  The code evaluated at all integers in the respective
ranges, which was overkill and the scatter plots became continuous. One can
obtain more economically many more periods by evaluating say at only 100
points per period.  This is feasible because the defining series are usable
directly not being hampered by exponential explosion. (The number of terms
for a given fixed target precision is $\leq \log_b(m)+O(1)$ so we can
compute easily for very large individual $m$'s).


\begin{figure}[htbp]
   \caption{$b=2$, $A=\{1\}$, $(m+1)u_m$ at $s=1$ vs $\log_2(m)$, $20\leq m\leq \np{12800}$}
\label{fig:1}
  \centering
\includegraphics[width=\linewidth]{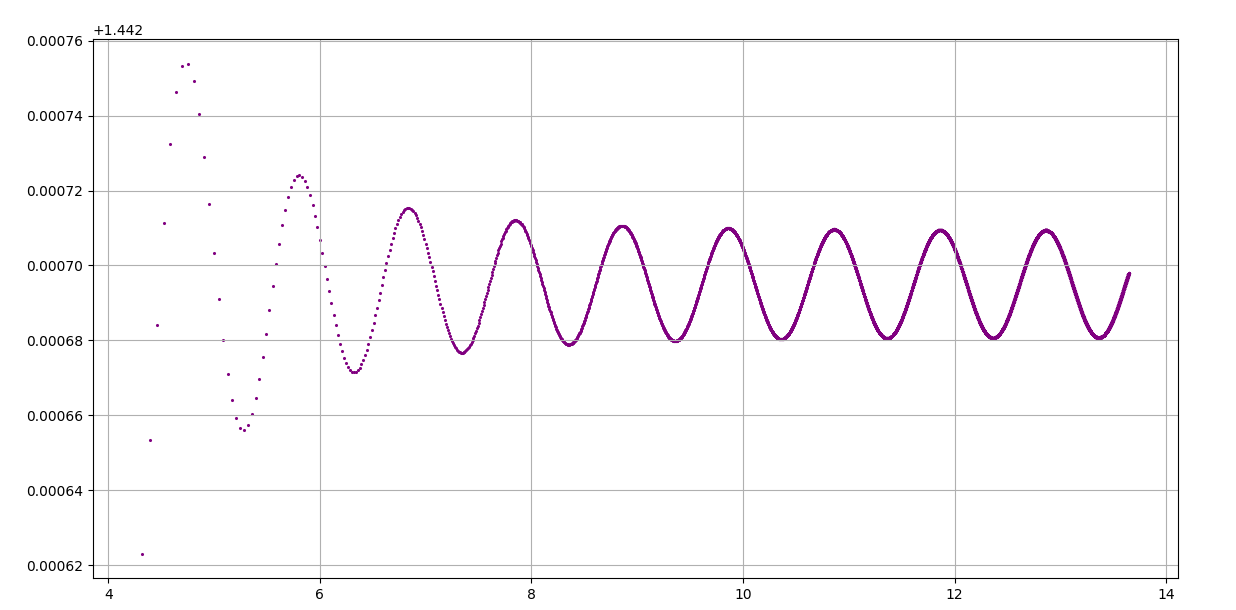}
\end{figure}
\begin{figure}[htbp]
  \caption{$b=3$, $A=\{2\}$, $(m+1)u_m$ at $s=1$ vs $\log_3(m)$, $20\leq m\leq \np{50000}$}
\label{fig:2}
\centering
\includegraphics[width=\linewidth]{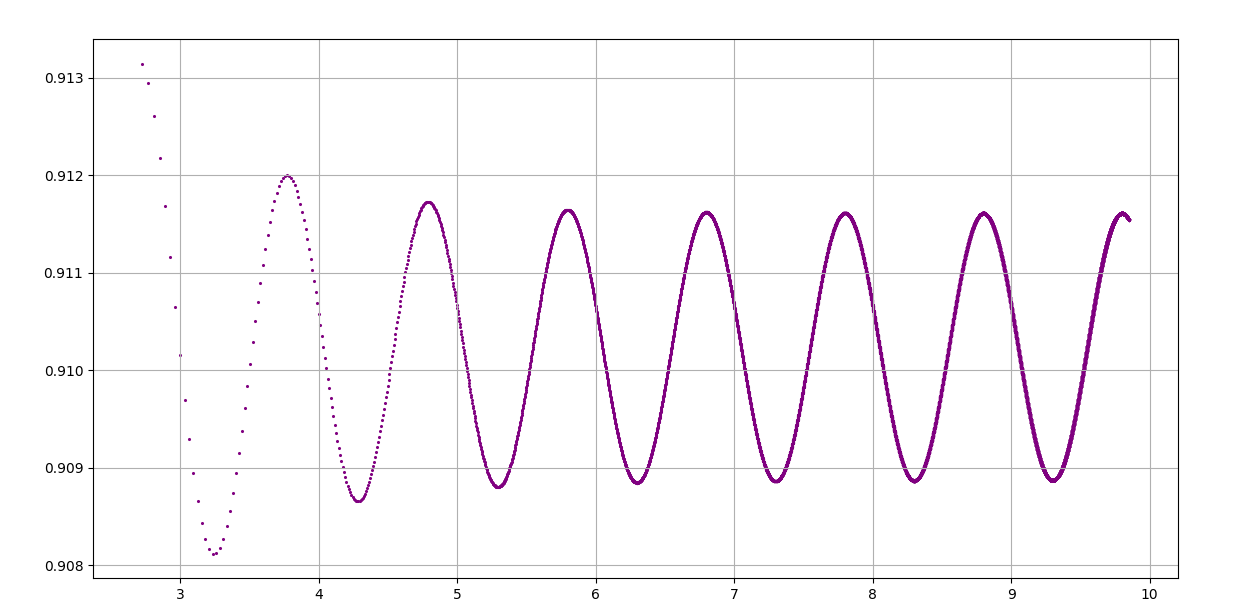}
\end{figure}
\begin{figure}[htbp]
  \caption{$b=10$, $A=\{9\}$, $(m+1)u_m$ at $s=1$ vs $\log_{10}(m)$, $20\leq m\leq \np{10000}0$}
\label{fig:3}
\centering
\includegraphics[width=\linewidth]{ 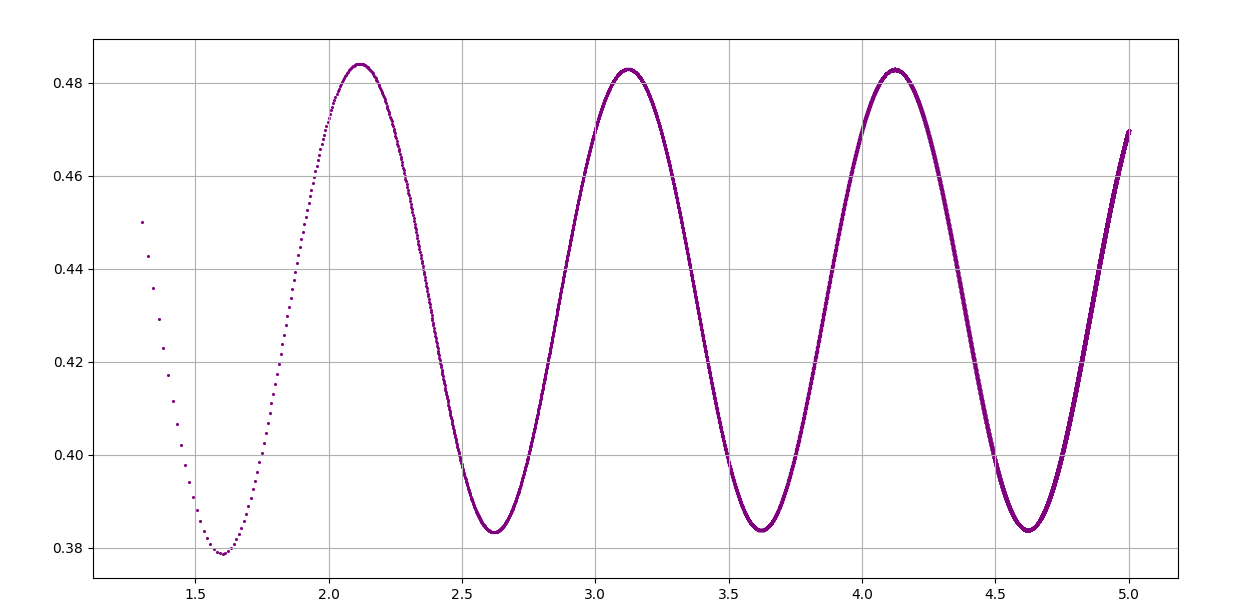}
\end{figure}

\bigskip

\noindent\textbf{Example 2.} We take $b=3$ and $A=\{0,2\}$, so $N=2$, and $B=A$.
Again we take $s=1$. Our main Theorem says that the $(mu_m(1))$ sequence will
asymptotically oscillate around a value equal to $(\log 3)^{-1}$ times the
``Hurwitz-Kempner'' sum $S_1= \sum_{n\in \cA}\frac1{n+1}$.  Here $\cA$ is the
set of non-negative integers using only $0$ and $2$ in base $3$.  We have at
time of writing no (own) immediately available script to compute the shifted Kempner
sum, but we know from using \texttt{kempner.py} available at
%
%
\url{https://gitlab.com/burnolmath/kempner} that $S_0= \sum_{n\in \cA,
  n>0}\frac1n$ rounded to 15 decimal places is $\np{1.341426555483088}$
(hundreds, even thousands of decimal digits can be obtained via this script).
We have $S_1 - S_0 = 1 - \sum_{n>0,n\in\cA}\frac1{n(n+1)}$. We can with no
problem let some (e.g. \textsf{Python}) code find all admissible integers less
than say $3^{15}$ (or even somewhat more) and obtain in this way numerically
(using for example \textsf{Python} \texttt{float}-type) a numerical
approximation to $\sum_{n>0,n\in\cA}\frac1{n(n+1)}$.  This brute force
computation gives the last term in $S_1 \approx \np{1.34142656} + 1 -
\np{0.21516068}$. Hence $S_1\approx\np{2.12626588}$. We then find
$S_1/\log(3)\approx\np{1.93541061}$.  Numerically we found that the average
(computed as per the recipe from Equation \eqref{eq:main3}) was about
$\np{1.93541}$, using $\np{9001}\leq m\leq \np{27000}$.  This is illustrated in Figure
\ref{fig:4}.

\begin{figure}[htbp]
    \caption{$b=3$, $A=\{0,2\}$, $(m+1)u_m(1)$ vs $\log_3(m)$, $21\leq m\leq \np{27000}$}
  \label{fig:4}
\centering
\includegraphics[width=\linewidth]{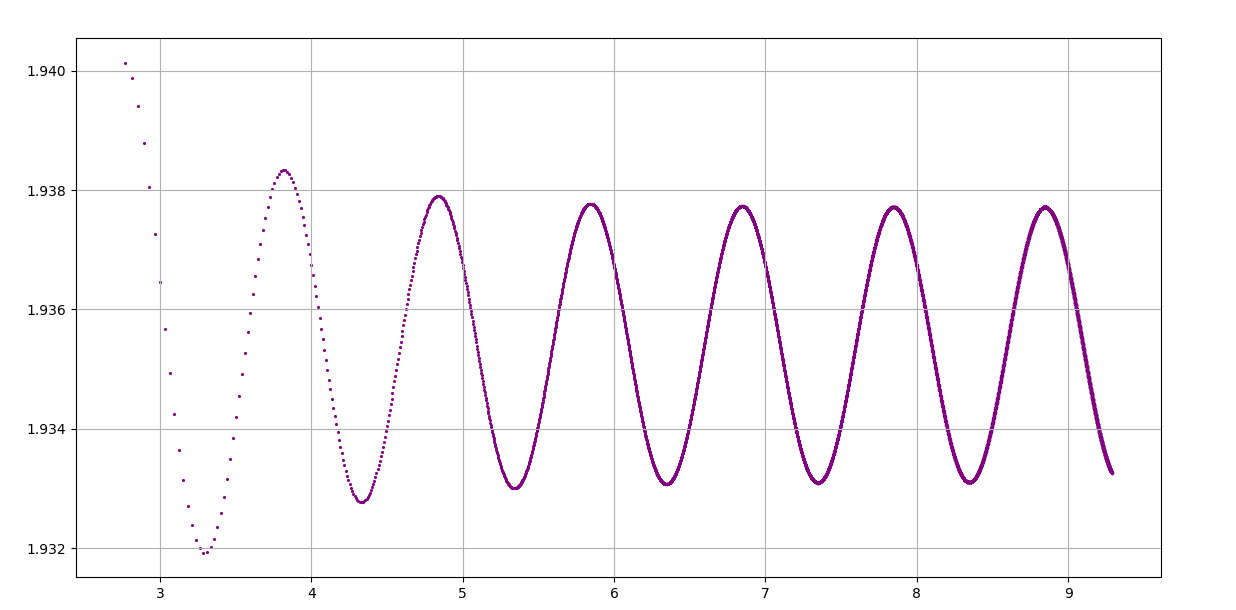}
\end{figure}

\bigskip

\noindent\textbf{Example 3.} We take $b=3$ and $A=\{1,2\}$, so $N=2$, and
$B=\{0,1\}$.  Again we take $s=1$ and adapt the steps of our previous example.
We can approximate $\sum_{n\in\cB} 1/(n+1)$ by $1 + \sum_{\text{``no 2''}}1/n
- \sum_{\text{``no 2''}}1/(n(n+1))$ where the last one is evaluated by brute
force and the middle one is also $2\sum_{\text{``no 1''}}1/n$ hence
known from the previous example or can be obtained using \texttt{kempner.py}.
This gives $1 + 2\times \np{1.34142655548}-\np{0.67491411107}$ for
$\sum_{n\in\cB}(n+1)^{-1}$. We divide by $\log 3$ to obtain about
$\np{2.73794407}$ as value of the average of the oscillations.  Numerically
using the $u_m$'s for $\np{3001}\leq m\leq \np{9000}$ we obtained an average of about
$\np{2.7379449}$ (and $\np{2.73794432}$ using some extrapolation).  So all is
well and is illustrated in Figure \ref{fig:5}.

\begin{figure}[htbp]
    \caption{$b=3$, $A=\{1,2\}$, $(m+1)u_m(1)$ vs $\log_3(m)$, $21\leq m\leq \np{9000}$}
  \label{fig:5}
\centering
\includegraphics[width=\linewidth]{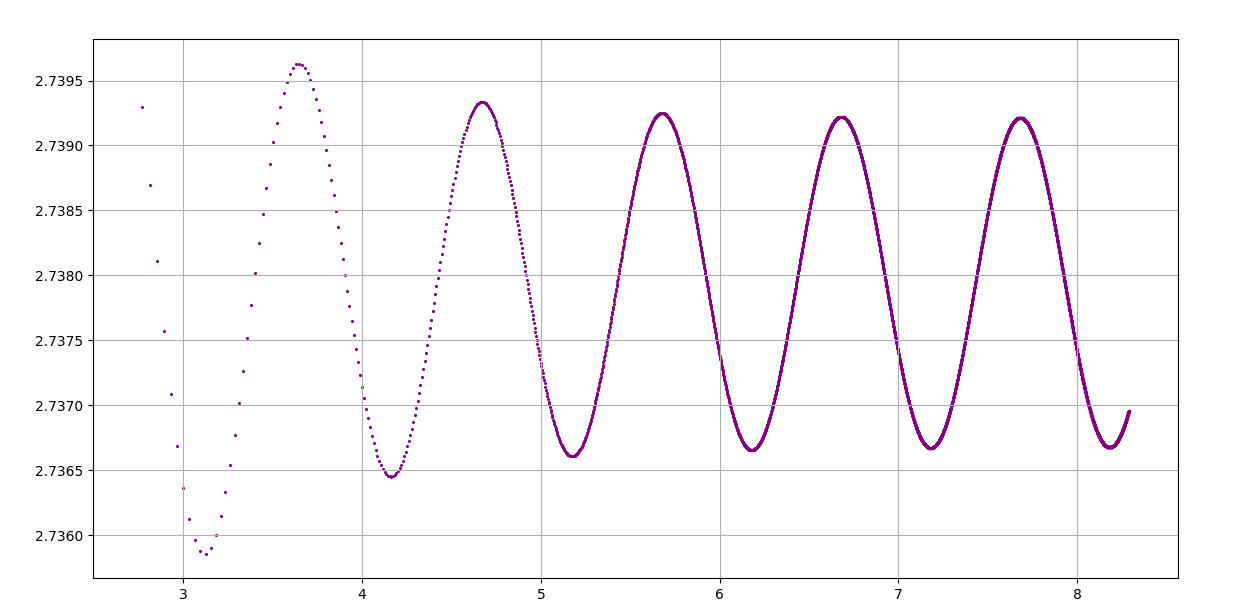}
\end{figure}

\bigskip

\noindent\textbf{Example 4.} We now consider the case with all digits being allowed,
so the $u_m(s)$'s can be used to compute the Riemann zeta function
(\cite{burnolzeta}).  They have an explicit formula in terms of Bernoulli
numbers, for $m\geq1$ (\cite[Thm.\@ 2]{burnolzeta}):
  \begin{equation*}
  u_m(s) =  \frac1{m+1}\frac{b^{s}}{b^s -b} - \frac{b^{s+1}}{2(b^{s+1}-b)} +
\sum_{1\leq k \leq \lfloor \frac m2 \rfloor}\frac{m!}{(m-2k+1)!}\frac{B_{2k}}{(2k)!}\frac{b^{s+2k}}{b^{s+2k}-b}.
  \end{equation*}
  But intermediate terms in this expression become huge so that the
  computation must use a very elevated precision, and this formula is very
  poor numerically for $m$ going into the thousands (or already much smaller).
  We use in its place the recurrence formula \eqref{eq:umrec}, it is stable
  numerically as it contains only positive contributions ($s$ real).  As was
  the case with $s=1$, it seems that the rescaled $u_m^*(s) =
  \frac{(s+1)_m}{m!} u_m(s)$'s display faster the asymptotic behavior.  As the
  added multiplicative factor is $\sim m^s/\Gamma(s+1)$, the expected average
  numerical value given by Theorem \ref{thm:main} is $\frac{\zeta(s)}{s
    \log(b)}$.  We illustrate this for $s=2$ with $b=2$, $m\leq \np{10000}$, for
  which we found numerically an average about equal to $\np{1.1865690}$ to be
  compared with $\zeta(2)/(2\log(2))\approx \np{1.1865691104156}$.  See Figure
  \ref{fig:6}.  And for $s=3$ with $b=10$, also $m\leq \np{10000}$, we found
  numerically an average about equal to $\np{0.17412609}$ to be compared with
  $\zeta(3)/(3\log(10))\approx \np{0.17401555999}$. See Figure \ref{fig:7}.
\begin{figure}[htbp]
    \caption{$b=2$, $A=\{0,1\}$, $\frac{(m+1)(m+2)}2 u_m(2)$ vs
      $\log_2(m)$, $16\leq m\leq \np{10000}$}
    \label{fig:6}
\centering
\includegraphics[width=\linewidth]{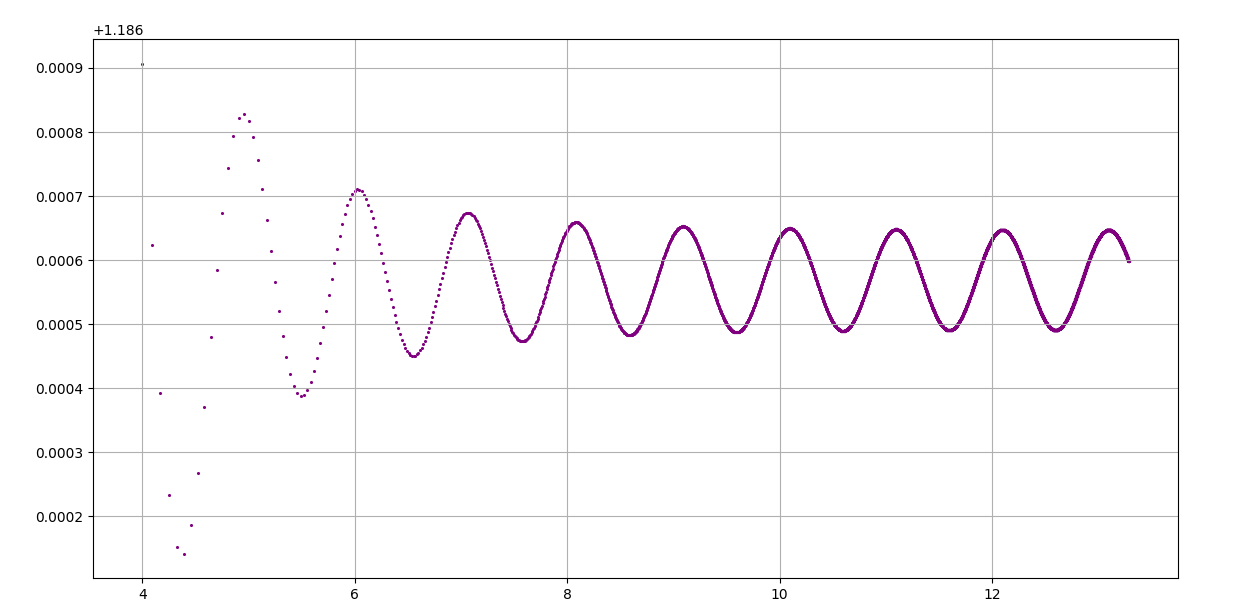}
  \end{figure}

\begin{figure}[htbp]
    \caption{$b=10$, $A=\{0,\dots,9\}$, $\frac{(m+1)(m+2)(m+3)}{6}u_m(3)$ vs $\log_{10}(m)$, $1\leq m\leq \np{10000}$}
    \label{fig:7}
\centering
\includegraphics[width=\linewidth]{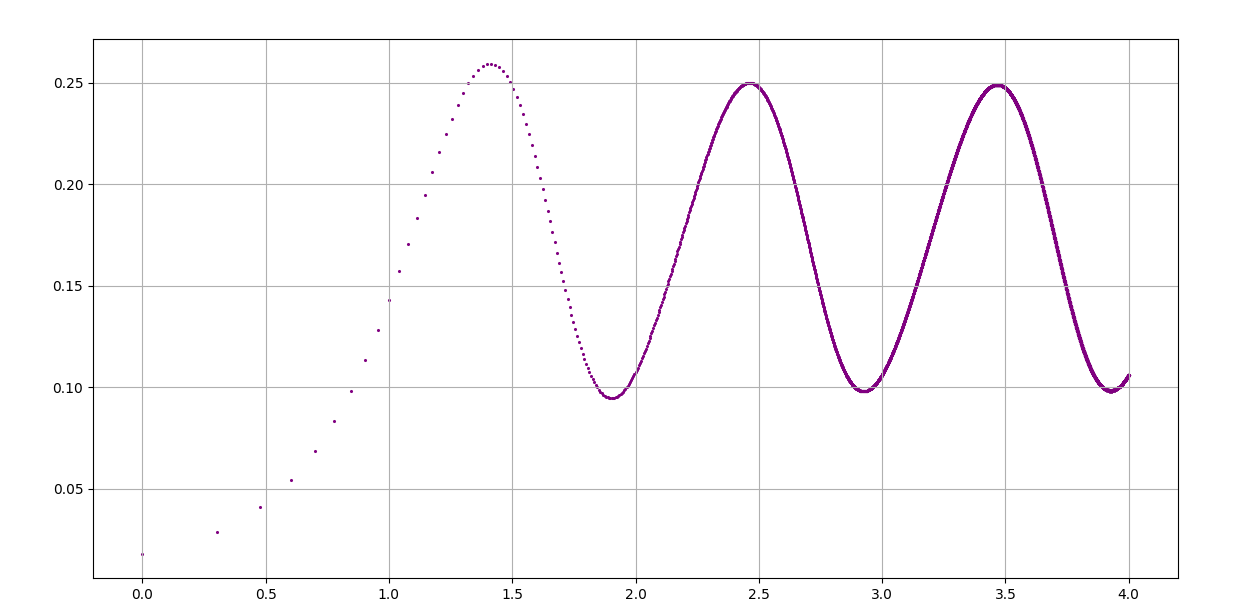}
  \end{figure}


  \bigskip

\noindent\textbf{Example 5.} Our final examples are with $b=8$, $A=\{0,1,3,5\}$
  and $s=1$ or $s=3$.  Here $\kappa=\frac 75$ and we plot
  $\kappa^m(s+1)_m(m!)^{-1}u_m(s)$ for $1\leq m\leq \np{10000}$ as a function of
  $\log_8(m)$ (see Figures \ref{fig:8} and \ref{fig:9}).  As
  $(s+1)_m/m!\sim m^s\Gamma(s+1)^{-1}$, the expected average according to
  Theorem \ref{thm:main2} is $\frac{H(s)}{s \log(8)}$ with $H(s) =
  \sum_{n\in\cB}(7n/5 + 1)^{-s}$, and $\cB$ is the set of non-negative
  integers using in radix-$8$ exclusively the digits from $\{0,2,4,5\}$.  For
  $s=1$, we compute $H(1)$ as the sum of $1+
  \frac57\sum_{n>0,n\in\cB}n^{-1}-\sum_{n>0,n\in\cB}25/(7n(7n+5))$ where the
  last term is evaluated by brute force and the middle one using
  \texttt{kempner.py}.  This gives $H(1)\approx 1 + \np{1.281146045711} -
  \np{0.148015877970}\approx \np{2.133130167741}$.  This gives about
  $\np{1.025818771524}$ after dividing by $\log 8$.  Regarding the numerical
  evaluation for $m\leq 10000$ via the recurrence relation
  \eqref{eq:umrec}, we obtained from it on the last period an
  estimated average of $\np{1.0257836}$. And as the average on the
  previous period was $\np{1.0255340}$,  extrapolating with the hypothesis that
  the error is divided by $8$ at each period gives $\np{1.025819}$ which
  is indeed close to our expectation.

  Regarding $s=3$, we computed approximately $H(3)$ by brute force using a
  simple \textsf{Python} script using only \texttt{float}-type (and with not
  much concern for errors in least significant digits).  This gives
  $H(3)\approx\np{1.0239193028}$. Dividing by $3\log(8)$ we obtain
  $\np{0.16413370005}$.  And numerically using $m$ from $1251$ to $10000$ we found a
  numerical average of about $\np{0.164113327}$ and extrapolating from
  comparison with the previous period, about $\np{0.164136906}$.
\begin{figure}[htbp]
    \caption{$b=8$, $A=\{0,1,3,5\}$, $(\frac75)^m(m+1)u_m(1)$ vs
      $\log_8(m)$, $1\leq m\leq \np{10000}$}
    \label{fig:8}
\centering
\includegraphics[width=\linewidth]{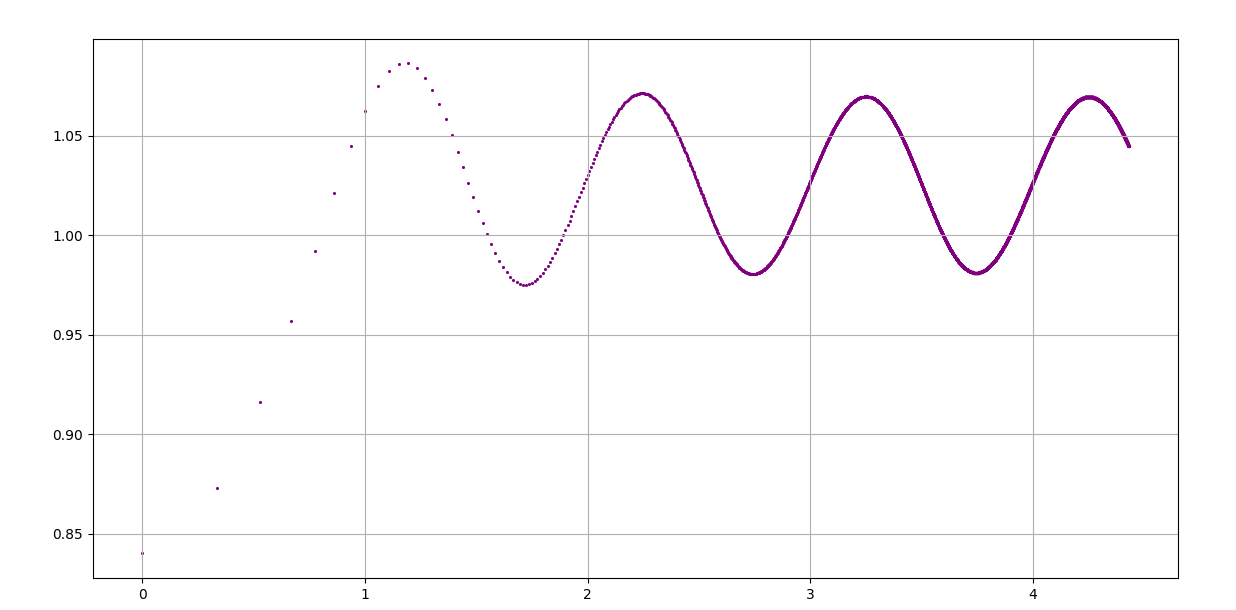}
  \end{figure}

\begin{figure}[htbp]
    \caption{$b=8$, $A=\{0,1,3,5\}$, $(\frac75)^m\binom{m+3}{3}u_m(3)$ vs $\log_{8}(m)$, $1\leq m\leq \np{10000}$}
    \label{fig:9}
\centering
\includegraphics[width=\linewidth]{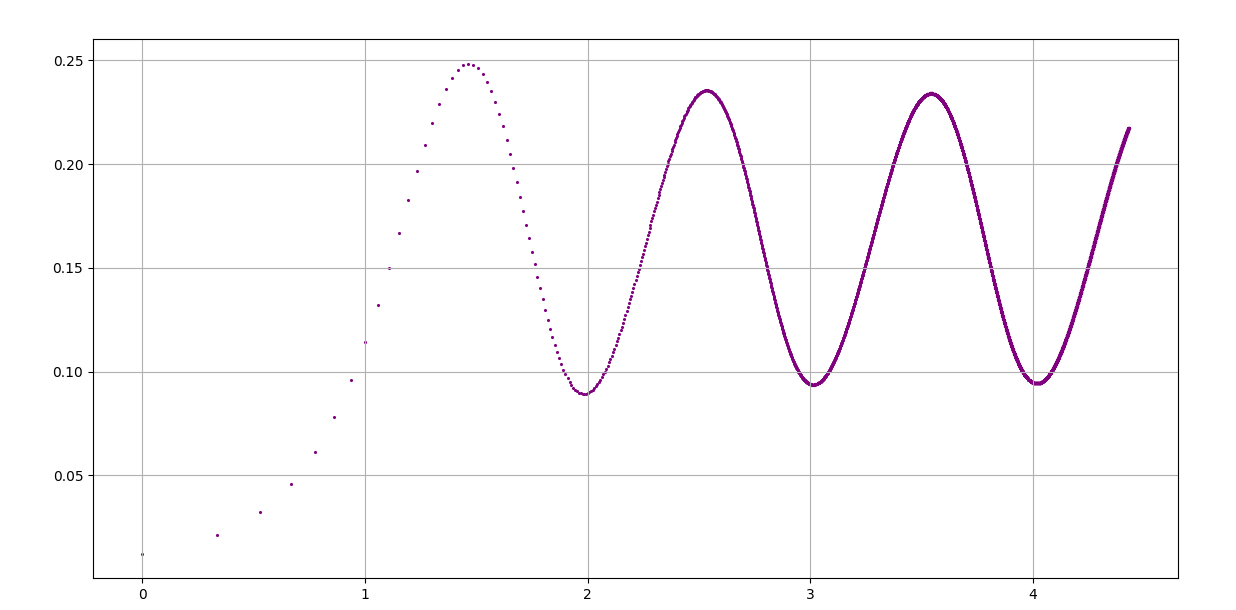}
  \end{figure}

\providecommand\bibcommenthead{}
\def\blocation#1{\unskip} \footnotesize


\end{document}